\newtheorem{theorem}{Theorem}[section]
\newtheorem{proposition}[theorem]{Proposition}
\newtheorem{cor}[theorem]{Corollary}
\newtheorem{lemma}[theorem]{Lemma}
\newcommand{\C}{\mathcal{C}}
\newcommand{\V}{\mathrm{Vol}}
\renewcommand{\Re}{{\mathbb R}}
\newcommand{\Red}{\Re^d}
\newcommand{\Ci}{\mathcal{C}}
\newcommand{\Ei}{\mathcal{E}}
\newcommand{\Ball}{\mathbf{B}^d}
\newcommand{\omegad}{\omega_d}
\newcommand{\st}{\;|\;}
\newcommand{\noshow}[1]{}
\title{Colorful Helly-type Theorems for the Volume of Intersections of Convex 
Bodies}
\author{G\'abor Dam\'asdi}
\address{G. D.: Dept. of Computer Science, ELTE E\"otv\"os Lor\'and University, 
Budapest, Hungary}
\email{damasdigabor@caesar.elte.hu}
\author{Vikt\'oria F\"oldv\'ari}
\address{V. F.: Dept. of Geometry, ELTE E\"otv\"os Lor\'and University, 
Budapest, Hungary}
\email{foldvari@math.elte.hu}
\author{M\'arton Nasz\'odi}
\address{M. N.: 
Alfr\'ed R\'enyi Inst. of Math.; 
MTA-ELTE Lend\"ulet Combinatorial Geometry Research Group;
Dept. of Geometry, ELTE E\"otv\"os Lor\'and University, Budapest, Hungary}
\email{marton.naszodi@math.elte.hu}
\begin{document}
\begin{abstract}
We prove the following Helly-type result. Let 
$\mathcal{C}_1,\dots,\mathcal{C}_{3d}$ be finite families of convex bodies in 
$\mathbb{R}^d$. Assume that for any colorful selection of $2d$ sets, 
$C_{i_k}\in 
\mathcal{C}_{i_k}$ for each $1\leq k\leq 2d$ with $1\leq i_1<\dots<i_{2d}\leq 
3d$, the intersection $\bigcap\limits_{k=1}^{2d} C_{i_k}$ is of volume at least 
1. Then there is an $1\leq i \leq 3d$ such that 
$\bigcap\limits_{C\in \mathcal{C}_i} C$ is of volume at least $d^{-O(d^2)}$.
\end{abstract}

\maketitle

\section{Introduction}\label{sec:results}

According to Helly's Theorem, \emph{if the intersection of any $d+1$ members of 
a finite family of convex sets in $\Red$ is non-empty, then the intersection of 
all members of the family is non-empty.} 

A generalization of Helly's Theorem, known as the \textbf{Colorful Helly 
Theorem}, was given by Lov\'asz, and later by Bárány \cite{MR676720}: \emph{If 
$\Ci_1,\dots, \Ci_{d+1}$ are finite families (color classes) of convex sets in 
$\Red$, such 
that for any colorful selection $C_1\in \Ci_1,\dots, C_{d+1}\in \Ci_{d+1}$, the 
intersection $\bigcap\limits_{i=1}^{d+1} C_i$ is non-empty, then for some $j$, 
the intersection $\bigcap\limits_{C\in \Ci_j} C$ is also non-empty.}

Another variant of Helly's Theorem was introduced by B\'ar\'any, Katchalski and 
Pach \cite{BKP82}, whose \textbf{Quantitative Volume Theorem} states the 
following. \emph{Assume that the intersection of any $2d$ members of a finite 
family of convex sets in $\Red$ is of volume at least 1. Then the volume of 
the intersection of all members of the family is of volume at least $c_d$, a 
constant depending on $d$ only.}

They proved that one can take $c_d=d^{-2d^2}$ and conjectured that it should 
hold with $c_d=d^{-cd}$ for an absolute constant $c>0$. It was confirmed  with 
$c_d\approx d^{-2d}$ in \cite{MR3439267}, whose argument was then
refined by Brazitikos \cite{Bra17}, who showed that one may take $c_d\approx 
d^{-3d/2}$. For more on quantitative Helly-type results, see the surveys 
\cite{HW18survey,DGFM19survey}.

In the present paper, we combine the two directions: colorful and quantitative.

\subsection{Ellipsoids and volume}

A well known consequence of John's Theorem (Corollary~\ref{cor:johnvolume}), is 
that any compact convex set $K$ with non-empty interior contains a unique 
ellipsoid $\Ei$ of maximal volume, moreover, $\Ei$ enlarged around its center 
by a factor $d$ contains $K$ (cf. \cite{Ba97}). It follows that the 
volume of the largest ellipsoid contained in $K$ is of volume at least 
$d^{-d}\V(K)$. More precise bounds for this volume ratio are known (cf. 
\cite{Ba97}), but we will not need them.

As shown in \cite[Section~3]{MR3439267}, in the Quantitative Volume Theorem, 
the $d^{-cd}$ factor is sharp up to the absolute constant $c$. In particular, 
for every sufficiently large positive integer $d$, there is a family of convex 
sets satisfying the assumptions of the theorem whose intersection is of volume 
roughly $d^{-d/2}$.

John's Theorem and the fact above yield that bounding the volume of 
intersections and bounding the volume of ellipsoids contained in the 
intersections are essentially equivalent problems: the only difference is a 
multiplicative factor $d^d$ which is of no consequence, unless one wants to 
find the best constants in the exponent. Thus, from this point on, 
we phrase our results in terms of the volume of ellipsoids contained in 
intersections. Its benefit is that this is how in the proofs we actually 
``find volume'': we find ellipsoids of large volume.

\subsection{Main result: few color classes}

Our main result is the following.

\begin{theorem}[Colorful Quantitative Volume Theorem with Ellipsoids -- Few 
Color Classes]\label{thm:colorfulqellipsoid}
Let $\C_1,\linebreak[0]\dots,\C_{3d}$ be finite families of convex bodies in 
$\Red$. Assume that for any colorful selection of $2d$ sets, $C_{i_k}\in 
\C_{i_k}$ for each $1\leq k\leq 2d$ with $1\leq i_1<\dots<i_{2d}\leq 3d$, the 
intersection $\bigcap\limits_{k=1}^{2d} C_{i_k}$ contains an ellipsoid of 
volume at least 1.
Then, there exists an $1\leq i \leq 3d$ such that $\bigcap\limits_{C\in \C_i} 
C$ contains an ellipsoid of volume at least $c^{d^2}d^{-5d^2/2}$ with an 
absolute constant $c\geq0$.
\end{theorem}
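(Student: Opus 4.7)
The plan is to combine a John's theorem reduction inside each color class with a quantitative Helly-type argument leveraging the slack between the $2d$-colorful hypothesis and the $3d$ available color classes. Writing $F_i := \bigcap \C_i$ and letting $E_i$ denote the John ellipsoid of $F_i$, the goal is to lower-bound $\V(E_{i^\ast})$ for some $i^\ast$. The first step is to reduce each $\C_i$ to a subfamily $\C_i' \subseteq \C_i$ of size at most $d(d+3)/2$ whose intersection has the same John ellipsoid $E_i$. Fritz John's theorem yields at most $d(d+3)/2$ contact points in $\partial E_i \cap \partial F_i$, each lying on the boundary of some $C \in \C_i$; collecting one such body per contact point gives the desired $\C_i'$, satisfying $E_i \subseteq \bigcap \C_i' \subseteq d \cdot E_i$ by Corollary~\ref{cor:johnvolume}. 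The colorful hypothesis clearly passes to the reduced families, and each class now has at most $O(d^2)$ bodies.

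Second, I would argue by contradiction, supposing that every $E_i$ satisfies $\V(E_i) < \delta := c^{d^2} d^{-5d^2/2}$. As a preliminary sanity check, fixing any single body $C_i^\ast \in \C_i'$ per class produces a family of $3d$ bodies in which every $2d$-subset is automatically colorful; the classical (non-colorful) Quantitative Volume Theorem of Bárány--Katchalski--Pach then yields that $\bigcap_{i=1}^{3d} C_i^\ast$ contains an ellipsoid of volume at least $d^{-O(d)}$, which is consistent with but much weaker than the claim. To go further, I would pick a ``target'' class $i^\ast$ and iteratively encode the at most $d(d+3)/2$ John contacts of $E_{i^\ast}$ using bodies drawn from $2d-1$ auxiliary classes. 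The $d$-class slack (since $2d \leq 3d$) provides the flexibility to distribute multiple contacts per auxiliary class without violating the one-body-per-class colorful constraint; the resulting colorful $2d$-intersection then lies inside a controlled dilate of $E_{i^\ast}$ whose ellipsoid volume is forced to be strictly less than $1$ as soon as $\V(E_{i^\ast}) < \delta$, contradicting the hypothesis.

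The hardest step will be the contact-encoding construction: simultaneously realizing up to $d(d+3)/2 = O(d^2)$ John contacts of $E_{i^\ast}$ via a single colorful selection of only $2d$ bodies. This appears to require a recursive approach in which each of $O(d)$ stages reduces the effective dimension by splitting off a few contacts; each stage seems to cost a factor of $d^{-d}$ (a John-type loss in the reduced dimension), and accumulating over the $O(d)$ stages produces the $d^{-O(d^2)}$ factor visible in the conclusion. Pinning down the precise constants $5/2$ and $c^{d^2}$ will require careful bookkeeping of the John normalization at each recursive step, and is the place where I would expect the argument to be most delicate.
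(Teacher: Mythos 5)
Your sketch leaves a genuine gap at precisely the step you flag as ``the hardest'': the contact-encoding construction is never given, only conjectured to exist, and as described it does not seem to make sense. The John contacts of $E_{i^\ast}$ with $\bigcap\C_{i^\ast}$ are realized by bodies belonging to $\C_{i^\ast}$ alone, while a colorful selection can take at most one body from $\C_{i^\ast}$. So it is unclear how bodies drawn from $2d-1$ \emph{other} classes could ``encode'' those contacts, and the proposed $O(d)$-stage dimension-reduction recursion is not developed enough to check whether it closes this gap. The first reduction (shrinking each $\C_i$ to $d(d+3)/2$ bodies) is correct but does not by itself buy anything, and the ``sanity check'' via the non-colorful Quantitative Volume Theorem establishes a statement with the wrong quantifier (it bounds $\bigcap_i C_i^\ast$ for one colorful choice, not $\bigcap\C_i$ for one color class), so it does not feed into a contradiction.

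The paper's argument is along a fundamentally different route that you did not anticipate. It fixes a coordinate direction and introduces an ordering on ellipsoids via the \emph{lowest ellipsoid} of a convex body (Lemma~\ref{lowest}). Among all colorful selections of $2d-1$ bodies, it takes the one whose lowest ellipsoid is highest, normalizes so this ellipsoid is $\Ball$, and cuts by the half-space $H_1=\{x:x^Te_d\le 1\}$ so that $\Ball$ becomes the John ellipsoid of $M := C_1\cap\dots\cap C_{2d-1}\cap H_1$. For any colorful selection $C_{2d},\dots,C_{3d}$ from the remaining $d+1$ classes, the extremal choice of $\Ball$ guarantees that every $2d$-fold intersection of $C_1,\dots,C_{2d-1},H_1,C_{2d},\dots,C_{3d}$ contains a unit-volume ellipsoid, so the non-colorful Quantitative Volume Theorem with Ellipsoids (Corollary~\ref{lem:qhellyell}) places an ellipsoid of volume $\approx d^{-3d/2}$ inside $M\cap C_{2d}\cap\dots\cap C_{3d}$; comparing to the John ellipsoid $\Ball$ of $M$ via Lemma~\ref{lem:hellyklee} produces a translate of $\approx d^{-5d/2}\Ball$ inside $C_{2d}\cap\dots\cap C_{3d}$. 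Since this holds for every colorful choice from $\C_{2d},\dots,\C_{3d}$, the colorful Helly theorem for translate-containment (Corollary~\ref{cor:chellyklee}, with $d+1$ classes) yields a single class $\C_i$ whose entire intersection contains such a translate, giving the bound $c^{d^2}d^{-5d^2/2}$. The ellipsoid ordering, the extremal half-space, and the translate form of colorful Helly are the key ideas, and none appear in your proposal.
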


We rephrase this theorem in terms of the volume of intersections, as this form 
may be more easily applicable.
\begin{cor}[Colorful Quantitative Volume Theorem -- Few Color 
Classes]\label{cor:colorfulqellipsoid}
Let $\C_1,\linebreak[0]\dots,\C_{3d}$ be finite families of convex bodies in 
$\Red$. 
Assume that for any colorful selection of $2d$ sets, $C_{i_k}\in \C_{i_k}$ for 
each $1\leq k\leq 2d$ with $1\leq i_1<\dots<i_{2d}\leq 3d$, the intersection 
$\bigcap\limits_{k=1}^{2d} C_{i_k}$ is of volume at least 1. 

Then, there exists an $1\leq i \leq 3d$ such that $\V\left(\bigcap\limits_{C\in 
\C_i} 
C\right)\geq c^{d^2}d^{-7d^2/2}$ with an absolute constant $c\geq0$.
\end{cor}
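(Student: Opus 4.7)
The plan is to derive this corollary from Theorem~\ref{thm:colorfulqellipsoid} by invoking the consequence of John's theorem (Corollary~\ref{cor:johnvolume}) to bridge between the volume of a convex body and the volume of an inscribed ellipsoid. Theorem~\ref{thm:colorfulqellipsoid} is stated with ellipsoid-volume bounds in both hypothesis and conclusion, whereas the corollary uses plain volumes, so the task is merely to translate the hypothesis in the right direction and note that the conclusion goes the easy way.

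First, I would apply the fact that every convex body $K\subseteq\Red$ contains an ellipsoid of volume at least $d^{-d}\V(K)$ to each colorful $2d$-wise intersection. Since the hypothesis guarantees $\V\bigl(\bigcap_{k=1}^{2d}C_{i_k}\bigr)\geq 1$, each such intersection then contains an ellipsoid of volume at least $d^{-d}$. This is weaker than the hypothesis of Theorem~\ref{thm:colorfulqellipsoid}, which demands ellipsoids of volume at least $1$, so I would remedy this by rescaling: dilate every body in every family $\C_j$ by a linear factor $\lambda=d$ about a common origin. This multiplies all volumes by $\lambda^{d}=d^{d}$, and hence the colorful $2d$-wise intersections of the rescaled families contain ellipsoids of volume at least $d^{d}\cdot d^{-d}=1$. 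Theorem~\ref{thm:colorfulqellipsoid} then yields an index $i$ such that $\bigcap_{C\in\C_i}\lambda C$ contains an ellipsoid of volume at least $c^{d^2}d^{-5d^2/2}$.

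The final step is to undo the dilation. The original intersection $\bigcap_{C\in\C_i}C$ then contains an ellipsoid of volume at least $\lambda^{-d}\cdot c^{d^2}d^{-5d^2/2}=c^{d^2}d^{-5d^2/2-d}$, and its volume is at least this much since a body has volume no smaller than any ellipsoid it contains. Because $-5d^2/2-d\geq -7d^2/2$ for all $d\geq 1$, the bound $\V\bigl(\bigcap_{C\in\C_i}C\bigr)\geq c^{d^2}d^{-7d^2/2}$ claimed in the corollary follows, with room to spare. There is no real obstacle here once Theorem~\ref{thm:colorfulqellipsoid} is in hand; the only point to verify is the (evident) scale invariance of both the hypothesis and the conclusion under the dilation, which justifies the rescaling step. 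The loss of a factor $d^{-d}$ is precisely the price of passing from an ellipsoid-volume formulation to a body-volume formulation via John's theorem, and is easily absorbed by the $c^{d^2}$ factor.
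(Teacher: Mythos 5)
Your proof is correct and follows essentially the same route as the paper: use Corollary~\ref{cor:johnvolume} to replace the volume hypothesis with an ellipsoid-volume hypothesis, rescale so that Theorem~\ref{thm:colorfulqellipsoid} applies, and then undo the scaling. You even obtain the sharper exponent $-5d^2/2-d$, which dominates the paper's stated $-7d^2/2$, so the claimed bound follows a fortiori.
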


Observe that the smaller the number of color classes in a colorful Helly-type 
theorem, the stronger the theorem is. For example, the Colorful Helly 
Theorem (see the top of the section) is stated with $d+1$ color classes, but it 
is easy to see that it implies the same result with $\ell\geq d+2$ color 
classes, as the last $\ell-(d+1)$ color classes make the assumption of the 
theorem stronger and the conclusion weaker. We note also that the Colorful 
Helly Theorem does not hold with less than $d+1$ color classes, as the number 
$d+1$ cannot be replaced by any smaller number in Helly's Theorem.

The novelty of the proof of Theorem~\ref{thm:colorfulqellipsoid} is the 
following. As we will see later, similar looking statements can be obtained by 
taking the Quantitative Volume Theorem as a ``basic'' Helly-type theorem, and 
combining it with John's Theorem and a combinatorial argument. This approach 
yields results with $d(d+3)/2$ color classes, but does not seem to yield 
results with fewer color classes. In order to achieve that, first, we introduce 
an ordering on the set of ellipsoids, and second, we give a finer geometric 
examination of the situation by comparing the maximum volume ellipsoid of a 
convex body $K$ to other ellipsoids contained in $K$. 

We find it an intriguing question whether one can decrease the number of color 
classes to $2d$ (possibly with an even weaker bound on the volume of the 
ellipsoid obtained), and whether an order $d^{-cd}$ lower bound on the volume 
of 
the ellipsoid 
can be shown.

\subsection{Earlier results and simple observations}

In 1937, Behrend \cite{MR1513119} (see also Section~6.17 of the survey 
\cite{MR0157289} by Danzer, Gr\"unbaum and Klee) proved a planar quantitative 
Helly-type result:
\emph{If the intersection of any 5 members of a finite family of convex sets in 
$\Re^2$
contains an ellipse of area 1, then the intersection of all members of the 
family contains an ellipse of area 1}. We note that, since every convex set in 
$\Re^2$ is the intersection of the half-planes containing it, the result is 
equivalent to the formally weaker statement where the family consists of 
half-planes only. This is the form in which it is stated in \cite{MR0157289}.

In \cite[Section~6.17]{MR0157289}, it is mentioned that John's Theorem 
(Theorem~\ref{John}) should be applicable to extend Behrend's result to 
higher dimensions. We spell out this argument, and present a straightforward 
proof of the following.

\begin{proposition}[Helly-type Theorem with Ellipsoids]\label{ell}
Let $\Ci$ be a finite family of at least $d(d+3)/2$ convex sets in $\Red$, and 
assume that 
for any selection $C_1,\dots, C_{d(d+3)/2}\in \Ci$, the intersection 
$\bigcap\limits_{i=1}^{d(d+3)/2} C_i$ contains an ellipsoid of volume 1. Then  
$\bigcap\limits_{C\in\Ci} C$ also contains an ellipsoid of volume 1. 
\end{proposition}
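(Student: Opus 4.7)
The plan is to identify the maximum-volume ellipsoid $\Ei^*$ inscribed in the full intersection $K=\bigcap_{C\in\Ci}C$ and to show that it is already the maximum-volume ellipsoid of some sub-intersection of at most $d(d+3)/2$ members of $\Ci$, so that the hypothesis can be applied there directly. Before doing so, I would first check that $K$ is a convex body, which follows from a quick application of Helly's theorem to the interiors: any $d+1 \le d(d+3)/2$ members of $\Ci$ have full-dimensional intersection by hypothesis, so open Helly gives a common interior point.

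With $\Ei^*$ in hand, I would invoke John's characterization. After an affine change of coordinates sending $\Ei^*$ to $\Ball$, there must exist contact points $u_1,\dots,u_m\in\partial\Ball\cap\partial K$ and positive weights $c_1,\dots,c_m$ satisfying
\[
  \sum_{i=1}^{m} c_i u_i = 0 \qquad \text{and} \qquad \sum_{i=1}^{m} c_i u_i u_i^T = I_d.
\]
These are $d + d(d+1)/2 = d(d+3)/2$ linear constraints in total, so Carath\'eodory's theorem for conic combinations in $\Re^{d(d+3)/2}$ lets me assume $m \le d(d+3)/2$.

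Next, each $u_i$ lies on $\partial C_{j_i}$ for some $C_{j_i}\in\Ci$. Setting $K' = \bigcap_{i=1}^{m} C_{j_i} \supseteq K$, the points $u_i$ still lie in $\partial\Ball\cap\partial K'$ (each is a boundary point of one of the bodies defining $K'$), so the very same John identities, via the sufficient direction of John's theorem, certify that $\Ball$ is the maximum-volume ellipsoid of $K'$ as well. Finally, I would pad $\{C_{j_1},\dots,C_{j_m}\}$ with arbitrary distinct members of $\Ci$ up to a selection of exactly $d(d+3)/2$ sets (possible since $|\Ci|\ge d(d+3)/2$); by hypothesis, their intersection---which lies inside $K'$---contains an ellipsoid of volume $1$. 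Translating back through the affine normalization yields $\V(\Ei^*)\ge 1$, and since $\Ei^*\subseteq K$, the proposition follows.

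The main thing to verify is the Carath\'eodory count, which works out to \emph{exactly} $d(d+3)/2$ and therefore matches the number of selections in the hypothesis---this tight match is really the heart of why John's theorem is the correct tool here. The other subtleties (full-dimensionality of $K$, the persistence of contact points on $\partial K'$, and tracking absolute volumes through the affine normalization) are essentially routine bookkeeping.
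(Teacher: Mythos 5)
Your proof is correct and takes essentially the same route as the paper: identify the John ellipsoid of the full intersection, use the contact points from John's characterization (where the paper's statement of John's theorem already builds in the bound $m\le d(d+3)/2$, which you re-derive via Carath\'eodory), attribute each contact point to a member of $\Ci$, and observe that the same decomposition certifies $\Ball$ as the maximum-volume ellipsoid of the smaller intersection. The paper phrases the conclusion as an equivalent reformulation rather than padding the selection and invoking the hypothesis directly, but this is only a presentational difference.
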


The number $d(d+3)/2$ is best possible. Indeed, for every dimension $d$, there 
exists a family of $d(d+3)/2$ half-spaces such that the unit ball $\Ball$ is 
the maximum volume ellipsoid contained in their intersection, but $\Ball$ is 
not the maximum volume ellipsoid contained in the intersection of any proper 
subfamily of them. That is, the intersection of any subfamily of $d(d+3)/2-1$ 
members contains an ellipsoid of larger volume than the volume of 
$\Ball$ (which we denote by $\omegad=\V(\Ball)$), and yet, the intersection of 
all members of the family does not contain an ellipsoid of larger volume than 
$\omegad$. This follows from the much stronger result, Theorem~4 in \cite{Gr88} 
by Gruber.

We prove a colorful version of Proposition~\ref{ell}.

\begin{proposition}[Colorful Quantitative Volume Theorem with Ellipsoids -- 
Many Color Classes]\label{colell}
Let $\Ci_1,\dots, \Ci_{d(d+3)/2}$ be finite families of convex 
bodies
in $\Red$, and assume that for any colorful selection 
$C_1\in \Ci_1,\dots, C_{d(d+3)/2}\in \Ci_{d(d+3)/2}$, the intersection 
$\bigcap\limits_{i=1}^{d(d+3)/2} C_i$ contains an ellipsoid of volume 1. Then 
for some $j$, the intersection $\bigcap\limits_{C\in\C_j} C$ contains 
an ellipsoid of volume 1. 
\end{proposition}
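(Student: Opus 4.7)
The plan is to lift the problem to the parameter space of ellipsoids in $\Red$, which has dimension $d(d+3)/2$ and therefore exactly matches the number of color classes in the statement. I write any ellipsoid as $T\Ball + v$ with $T$ a $d\times d$ symmetric positive semidefinite matrix and $v\in\Red$, so that ellipsoids are parametrized by $\mathrm{Sym}_d^+\times\Red\cong\Re^{d(d+3)/2}$. For each convex body $C$ I define
\[
K_C := \{(T,v) : T\Ball+v\subseteq C,\ \omegad\det T\ge 1\}.
\]
Each $K_C$ is a convex set, since the containment condition is linear in $(T,v)$ for every fixed $x\in\Ball$ and the volume condition is convex because $-\log\det$ is convex on the positive definite cone. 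In these terms, the hypothesis reads: every colorful transversal of the families $\{K_C:C\in\Ci_i\}$, $i=1,\dots,d(d+3)/2$, has nonempty intersection in $\Re^{d(d+3)/2}$, and the desired conclusion is that $\bigcap_{C\in\Ci_j} K_C\neq\emptyset$ for some $j$.

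The core task is then to prove a Colorful Helly type theorem for $K_C$-sets in $\Re^{d(d+3)/2}$ with only $d(d+3)/2$ color classes, one fewer than B\'ar\'any's theorem would demand for arbitrary convex sets. The route I would follow mirrors the way Proposition~\ref{ell} reduces the classical Helly number from $d(d+3)/2+1$ to $d(d+3)/2$ for these same sets via John's theorem. Arguing by contradiction, suppose $\bigcap_{C\in\Ci_j}K_C=\emptyset$ for every $j$. Applying Proposition~\ref{ell} inside each color class yields, for each $j$, a \emph{bad} subfamily $\{C_{j,1},\dots,C_{j,m_j}\}\subseteq\Ci_j$ of size $m_j\le d(d+3)/2$ whose intersection $D_j$ contains no ellipsoid of volume $1$; moreover the John ellipsoid of $D_j$ comes with an explicit Karush--Kuhn--Tucker certificate of this fact, supported on contact points distributed over the boundaries of the various $C_{j,k}$.

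The hard part will be extracting, out of these $d(d+3)/2$ bad subfamilies, a single colorful transversal $(C_j)_{j=1}^{d(d+3)/2}$ whose intersection contains no ellipsoid of volume $1$, contradicting the hypothesis. The naive choice $C_j\in\{C_{j,1},\dots,C_{j,m_j}\}$ only yields $\bigcap_j C_j\supseteq\bigcap_j D_j$, the wrong containment direction. Instead I would run a Colorful Carath\'eodory style argument on the KKT certificates themselves, selecting one index $k_j$ per color so that the chosen contact points still furnish an obstruction to the existence of a volume-$1$ ellipsoid in $\bigcap_j C_{j,k_j}$. As a sanity check, in the case $d=1$, where $d(d+3)/2=2$, the bad subfamily for each color class consists of the two intervals achieving the extreme left and right endpoints of $\bigcap\Ci_j$, and cross-pairing these across the two colors gives the contradiction by additive length inequalities. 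The higher-dimensional analogue has to generalize this pairing via John's theorem and in particular accommodate the nonlinear, log-concave volume constraint in the Carath\'eodory step, which is where I expect the main difficulty to lie.
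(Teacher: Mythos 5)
Your parametrization of ellipsoids by $(T,v)\in\mathrm{Sym}_d^+\times\Red\cong\Re^{d(d+3)/2}$ is sound, and the sets $K_C$ are indeed convex there (containment is an intersection of linear constraints, and $\det$ is log-concave on the PSD cone), so the reformulation as a Colorful Helly problem in $\Re^{d(d+3)/2}$ with the reduced Helly number $d(d+3)/2$ is a legitimate way to view the statement. But the proof stops exactly where the content begins. Your ``Colorful Carath\'eodory style argument on the KKT certificates'' is only named, not executed, and it is not clear it can be executed in the form you describe. The John/KKT certificate attached to the bad subfamily of color $j$ is tied to the John ellipsoid of $D_j=\bigcap_k C_{j,k}$: it consists of contact points on $\partial D_j\cap\partial\Ei_j$ and multipliers that certify optimality \emph{of that particular ellipsoid}. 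The certificates for distinct colors $j$ live on distinct ellipsoids $\Ei_j$ with incomparable normalizations, so there is no common affine position in which a single colorful selection of contact points could form a valid John decomposition for some new ellipsoid. Moreover, the standard derivation of Colorful Helly from Colorful Carath\'eodory in $\Re^N$ costs $N+1$ color classes; shaving this to $N$ is precisely the nontrivial part, and nothing in the sketch supplies the saving. Your $d=1$ cross-pairing is a special coincidence (intervals are determined by two linear constraints) and does not indicate a mechanism in higher dimensions.

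The paper's actual argument is a different, and in the end more elementary, route that sidesteps the certificate-merging problem entirely. One fixes a direction and defines, for each convex body $K$ containing an ellipsoid of volume $\omegad$, the \emph{lowest ellipsoid}: the unique volume-$\omegad$ ellipsoid in $K$ of minimal height (Lemma~\ref{lowest}, via uniqueness of the John ellipsoid of $K$ cut by the supporting half-space). This gives a total preorder on colorful intersections. Take the lowest ellipsoid of each colorful intersection, let $\Ei_{max}$ be the highest of these, realized by $C_1,\dots,C_{d(d+3)/2}$. Lemma~\ref{step3} shows that for some $j$ the ellipsoid $\Ei_{max}$ remains the lowest ellipsoid of $K_j=\bigcap_{i\ne j}C_i$ (this is where Proposition~\ref{ell} and John's theorem enter, but applied to the family $K_1,\dots,K_{d(d+3)/2},H_{\tau-\epsilon}$ rather than to certificate points). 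Then for any $C_0\in\Ci_j$, if $\Ei_{max}\not\subset C_0$ the lowest ellipsoid of $C_0\cap K_j$ would be strictly higher than $\Ei_{max}$, contradicting maximality; so $\Ei_{max}\subset\bigcap_{C\in\Ci_j}C$. This is Lov\'asz's ordering trick transplanted from points to ellipsoids, and that ordering on ellipsoids is the missing ingredient in your proposal.
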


According to \cite{algebr} by De Loera et al., a monochromatic Helly-type 
theorem implies a colorful version in a certain \emph{combinatorial setting}.
The novelty of Proposition~\ref{colell} is the \emph{geometric part} of the 
proof, where we introduce an ordering on the family of ellipsoids, and study 
the properties of this ordering (see Section~\ref{sec:ordering}).

Sarkar, Xue and Sober{\'o}n \cite[Corollary~1.0.5]{SaXuSo}, using matroids, 
recently obtained a result involving $d(d+3)/2$ color classes, but with the 
number of selected sets being $2d$.

\begin{proposition}[Sarkar, Xue and Sober{\'o}n \cite{SaXuSo}]\label{thm:SaXuSo}
Let $\C_1,\linebreak[0]\dots,\C_{d(d+3)/2}$ be finite families of convex 
bodies in $\Red$. Assume that for any colorful selection of $2d$ sets, 
$C_{i_k}\in \C_{i_k}$ for each $1\leq k\leq 2d$ with $1\leq 
i_1<\dots<i_{2d}\leq d(d+3)/2$, the intersection $\bigcap\limits_{k=1}^{2d} 
C_{i_k}$ 
contains an ellipsoid of volume at least 1. Then, there exists an $1\leq i \leq 
d(d+3)/2$ such that $\bigcap\limits_{C\in \C_i} C$ has volume at least 
$d^{-O(d)}$.
\end{proposition}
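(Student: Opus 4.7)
The plan is to combine the monochromatic Quantitative Volume Theorem, in the refined form of Brazitikos~\cite{Bra17} giving $c_d\approx d^{-3d/2}$, with the ellipsoid-valued colorful Helly theorem of Proposition~\ref{colell}, along the lines sketched earlier in this section.

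First, I would fix an arbitrary colorful selection $C_1\in\C_1,\dots,C_{d(d+3)/2}\in\C_{d(d+3)/2}$. The hypothesis of Proposition~\ref{thm:SaXuSo} implies that every $2d$-subfamily of $\{C_1,\dots,C_{d(d+3)/2}\}$ (which \emph{is} a colorful $2d$-selection) has intersection containing an ellipsoid of volume at least $1$, hence has volume at least $1$. Applying the Quantitative Volume Theorem to the single family $\{C_1,\dots,C_{d(d+3)/2}\}$ then yields
\[
\V\left(\bigcap_{i=1}^{d(d+3)/2} C_i\right)\geq c_d\approx d^{-3d/2}.
\]
By Corollary~\ref{cor:johnvolume}, this intersection contains an ellipsoid of volume at least $v:=d^{-d}c_d\approx d^{-5d/2}$. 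Since the colorful selection was arbitrary, every colorful $d(d+3)/2$-selection from $\C_1,\dots,\C_{d(d+3)/2}$ has intersection containing an ellipsoid of volume at least $v$.

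Finally, I would feed this uniform lower bound into Proposition~\ref{colell}, which is affinely scale-invariant: applying to every set a linear map of determinant $1/v$ converts the assertion just established into the hypothesis of Proposition~\ref{colell} verbatim. Invoking that proposition and undoing the rescaling produces an index $j$ for which $\bigcap_{C\in\C_j}C$ contains an ellipsoid of volume at least $v\approx d^{-5d/2}$ in the original coordinates, and in particular has volume at least $d^{-O(d)}$, as required.

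The genuinely geometric content is outsourced to Proposition~\ref{colell}; given it, the present statement follows from a two-step reduction (quantitative volume plus John). The main obstacle in this plan is structural rather than computational: the argument proceeds by choosing exactly one representative from each color class and then treating the resulting system as a \emph{monochromatic} instance of the Quantitative Volume Theorem, which is precisely what forces the number of color classes to be at least $d(d+3)/2$ (the dimension of the space of ellipsoids, and the number of classes needed in Proposition~\ref{colell}). Lowering this count to $3d$, as the present paper does in Theorem~\ref{thm:colorfulqellipsoid}, cannot be achieved by this combinatorial reduction and genuinely requires a finer geometric ingredient.
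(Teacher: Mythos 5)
Your proposal is correct and follows essentially the same route as the paper: combine the Quantitative Volume Theorem with John's ellipsoid to get an ellipsoid of volume $d^{-O(d)}$ in each colorful $d(d+3)/2$-intersection, then invoke Proposition~\ref{colell}. The only cosmetic difference is that the paper packages the first two steps as Corollary~\ref{lem:qhellyell}, whereas you unfold them and make the rescaling explicit.
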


For completeness, in Section~\ref{subsec:SaXuSo}, we sketch a brief argument 
showing that Proposition~\ref{thm:SaXuSo} immediately follows from 
our Proposition~\ref{colell} and the Quantitative Volume Theorem.


The structure of the paper is the following. In Section~\ref{sec:prelim}, we 
introduce some preliminary facts and definitions, notably, an ordering 
on the family of ellipsoids of volume at least 1 that are contained in a convex 
body. Section~\ref{sec:proofs} contains the proofs of our results.

\section{Preliminaries}\label{sec:prelim}

\subsection{John's ellipsoid}
\begin{theorem}[John \cite{Jo48}]\label{John}
Let $K\subset \Red$ be a convex body. Then $K$ contains a unique ellipsoid of 
maximal volume.  This ellipsoid is $\Ball$ if and only if $\Ball\subset K$ and 
there are contact points $u_1,\dots,u_m\in bd(K)\cap bd(\Ball)$ and positive 
numbers $\lambda_1,\dots,\lambda_m$ with $d+1\le m\le \frac{d(d+3)}{2}$ such 
that 
	\begin{equation*}
	\sum\limits_{i=1}^m\lambda_iu_i=0,
		\mbox{ and }
	 I_d=\sum\limits_{i=1}^m\lambda_i u_iu_i^T,
	\end{equation*}
where $I_d$ denotes the $d\times d$ identity matrix and the $u_i$ are column 
vectors.
\end{theorem}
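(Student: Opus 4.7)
The plan is to split the theorem into three parts: (i) existence and uniqueness of the maximal inscribed ellipsoid, (ii) the characterization via decomposition of the identity, and (iii) the cardinality bounds $d+1\leq m\leq d(d+3)/2$. For (i), I parameterize candidate ellipsoids as $E_{A,b}=A\Ball+b$ with $A$ symmetric positive-definite; the set of such $(A,b)$ with $E_{A,b}\subseteq K$ is compact because $K$ is bounded, so $\det A$ attains its maximum. Uniqueness of $A$ comes from strict concavity of $\log\det$ on symmetric positive-definite matrices: if $(A_1,b_1)$ and $(A_2,b_2)$ both realize the maximum, then $E_{A_*,b_*}\subseteq\tfrac12(E_1+E_2)\subseteq K$ with $A_*=(A_1+A_2)/2$ and $b_*=(b_1+b_2)/2$, but $\det A_*>\sqrt{\det A_1\det A_2}$ unless $A_1=A_2$; uniqueness of $b$ then follows by a perturbation argument showing that two distinct admissible centers would let one stretch slightly in the direction $b_2-b_1$ and produce a strictly larger ellipsoid.

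For (ii), assume without loss of generality that the maximal ellipsoid is $\Ball$. Via support functions, the condition $E_{A,b}\subseteq K$ becomes the family of convex constraints $g_u(A,b):=\langle u,b\rangle+\|Au\|\leq 1$, one per unit outer normal $u$ of a supporting hyperplane of $K$; at the optimum $(I_d,0)$ the active constraints correspond exactly to contact points $u\in bd(K)\cap bd(\Ball)$. Computing gradients gives $\nabla_A\log\det A|_{A=I_d}=I_d$, $\nabla_b\log\det A=0$, and $\nabla g_u|_{(I_d,0)}=(uu^T,u)$, so the KKT conditions yield nonnegative multipliers $\lambda_i$ supported on contact points $u_i$ with $\sum\lambda_i u_iu_i^T=I_d$ and $\sum\lambda_iu_i=0$. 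For the converse direction, given such a decomposition and any inscribed $E=A\Ball+b\subseteq K$ with symmetric positive-definite $A$, I weight the inequalities $g_{u_i}(A,b)\leq 1$ by $\lambda_i$; using $\|Au_i\|\geq\langle u_i,Au_i\rangle$ (Cauchy--Schwarz) together with $\sum\lambda_iu_i=0$ and $\sum\lambda_iu_iu_i^T=I_d$, I obtain $\mathrm{tr}(A)\leq d$, and AM--GM on the eigenvalues of $A$ gives $\det A\leq 1$, so $\Ball$ is indeed maximal.

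For (iii), the upper bound $m\leq d(d+3)/2$ follows from conic Carath\'eodory in the $d(d+3)/2$-dimensional real vector space of pairs (symmetric $d\times d$ matrix, vector in $\Red$): we express $(I_d,0)$ as a nonnegative combination of the vectors $(u_iu_i^T,u_i)$. The lower bound $m\geq d+1$ uses that $\sum\lambda_iu_iu_i^T=I_d$ forces $\{u_i\}$ to span $\Red$ (so $m\geq d$), and if $m=d$ the $u_i$ would be linearly independent, making $\sum\lambda_iu_i=0$ with all $\lambda_i>0$ impossible. The main obstacle is the rigorous justification of the KKT step in (ii) when $K$ has uncountably many supporting hyperplanes; I expect to handle this either by first approximating $K$ by a polytope that preserves the contact structure on $bd(\Ball)$, or by a direct Farkas-type perturbation argument showing that the absence of such a decomposition would allow one to enlarge $\Ball$ slightly while remaining inside $K$.
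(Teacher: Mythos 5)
The paper does not prove Theorem~\ref{John}: it is a classical result cited from \cite{Jo48} and used as a black box, so there is no in-paper proof to compare against. Judged on its own merits, your sketch is the standard optimization route to John's theorem and is essentially correct: existence from compactness; uniqueness of the shape matrix from strict concavity of $\log\det$ together with the inclusion $\tfrac{A_1+A_2}{2}\Ball+\tfrac{b_1+b_2}{2}\subseteq\tfrac12(E_1+E_2)\subseteq K$; the decomposition of the identity as the Fritz John / KKT conditions for maximizing $\log\det A$ over $\{(A,b): A\Ball+b\subseteq K\}$; the converse via $\mathrm{tr}(A)\le d$ and AM--GM on the eigenvalues; the upper bound $m\le d(d+3)/2$ by conic Carath\'eodory in the $d(d+3)/2$-dimensional space of pairs (symmetric matrix, vector); and the lower bound $m\ge d+1$ from the spanning condition plus $\sum\lambda_iu_i=0$ with positive weights.

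Two points should be tightened. A minor normalization slip: the constraint encoding $A\Ball+b\subseteq K$ for a general outer unit normal $u$ is $\langle u,b\rangle+\|Au\|\le \max_{x\in K}\langle u,x\rangle$, not $\le 1$; it reduces to $\le 1$ precisely at the contact normals, which is where the multipliers end up being supported, so no harm is done, but state it correctly. The substantive gap is the one you flag yourself: justifying finitely supported multipliers when $K$ has uncountably many supporting hyperplanes. Of your two proposed fixes, the Farkas/separation route is cleaner than polytope approximation. Concretely, set $T=\{(uu^T,u): u\in bd(K)\cap bd(\Ball)\}\subset \mathrm{Sym}(d)\times\Red$. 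Then $T$ is compact and $0\notin\mathrm{conv}(T)$, since any convex combination $\sum\mu_iu_iu_i^T$ with some $\mu_i>0$ is a nonzero positive semidefinite matrix; hence the conic hull of $T$ is closed. If $(I_d,0)$ were outside this cone, separation would produce a symmetric $H$ and $v\in\Red$ with $\mathrm{tr}(H)>0$ while $u^THu+\langle v,u\rangle\le 0$ for every contact normal $u$; then $(I_d+tH,\,tv)$ for small $t>0$ remains feasible (a compactness argument handles the normals bounded away from contact) and strictly increases $\log\det$, contradicting optimality. Then Carath\'eodory reduces the resulting conic representation of $(I_d,0)$ to at most $d(d+3)/2$ terms. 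Finally, the uniqueness of the center $b$ is correct in outline, but the ``stadium'' computation showing that $A\Ball+[b_1,b_2]$ contains an ellipsoid of strictly larger volume when $b_1\ne b_2$ should be written out. With these repairs the argument is complete.
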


The following is a well known corollary, see \cite[Lecture~3]{Ba97}.
\begin{cor}\label{cor:johnvolume}
 Assume that $\Ball$ is the unique maximal volume ellipsoid contained in a 
convex body $K$ in $\Red$. Then $d\Ball\supseteq K$.
\end{cor}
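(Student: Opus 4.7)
The plan is to apply the John decomposition (Theorem~\ref{John}) directly: fix an arbitrary point $p\in K$ and show $\|p\|\le d$, which gives $K\subseteq d\Ball$. The decomposition produces contact points $u_1,\dots,u_m\in bd(K)\cap bd(\Ball)$ and positive weights $\lambda_1,\dots,\lambda_m$ with $\sum_i\lambda_iu_i=0$ and $\sum_i\lambda_iu_iu_i^T=I_d$. I would extract three basic pieces of data from this: taking the trace of the matrix identity and using $\|u_i\|=1$ gives $\sum_i\lambda_i=d$; evaluating the quadratic form at $p$ gives $\|p\|^2=\sum_i\lambda_i\langle u_i,p\rangle^2$; and because $\Ball\subseteq K$ and $u_i\in bd(K)\cap bd(\Ball)$, the tangent hyperplane $\{x:\langle x,u_i\rangle=1\}$ of $\Ball$ at $u_i$ supports $K$, so $\langle u_i,p\rangle\le1$ for every $p\in K$.

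The next move is a substitution: set $t_i=\langle u_i,p\rangle$ and $s_i=1-t_i$, so $s_i\ge0$. Using $\sum_i\lambda_it_i=\langle\sum_i\lambda_iu_i,p\rangle=0$ and $\sum_i\lambda_i=d$, a short expansion of $(1-s_i)^2$ rewrites the quadratic form identity as
$$\sum_i\lambda_is_i^2 \;=\; d+\|p\|^2, \qquad \sum_i\lambda_is_i\;=\;d.$$
Finally, Cauchy--Schwarz gives $|t_i|\le\|u_i\|\cdot\|p\|=\|p\|$, so $s_i\le 1+\|p\|$. Plugging this pointwise bound into the linear identity $\sum_i\lambda_is_i=d$ yields $\sum_i\lambda_is_i^2\le(1+\|p\|)\,d$, and comparing with the first identity above produces $d+\|p\|^2\le d(1+\|p\|)$, i.e., $\|p\|^2\le d\|p\|$, as desired.

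The main obstacle here is conceptual rather than computational: a naive attempt to bound $\sum_i\lambda_it_i^2$ using only the upper bound $t_i\le1$ and the centroid relation $\sum_i\lambda_it_i=0$ fails, because the $t_i$ can a priori be very negative and inflate the second moment. The substitution $s_i=1-t_i$ is the right reformulation, because it converts the one-sided inequality $t_i\le 1$ into nonnegativity of $s_i$, after which a crude pointwise upper bound on $s_i$ (costing at most the factor $1+\|p\|$) combines linearly with $\sum_i\lambda_is_i=d$ and closes the estimate tightly enough to yield the sharp constant $d$.
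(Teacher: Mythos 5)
Your argument is correct. Note that the paper does not actually prove Corollary~\ref{cor:johnvolume}; it simply cites \cite[Lecture~3]{Ba97}, so there is no proof in the paper to compare against. Your proof is the standard one found in Ball's lecture notes: extracting $\sum_i\lambda_i=d$, $\|p\|^2=\sum_i\lambda_i\langle u_i,p\rangle^2$, and the supporting-hyperplane bound $\langle u_i,p\rangle\le 1$ from the John decomposition, then reorganizing via $s_i=1-\langle u_i,p\rangle\ge 0$ and the pointwise bound $s_i\le 1+\|p\|$ to obtain $\|p\|^2\le d\|p\|$. All steps check out, including the key identity $\sum_i\lambda_is_i^2=d+\|p\|^2$ and the use of $\sum_i\lambda_iu_i=0$ to kill the linear term. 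The one small thing worth spelling out (which you assert but do not justify) is why the tangent hyperplane to $\Ball$ at a contact point $u_i$ supports $K$: if some $q\in K$ had $\langle q,u_i\rangle>1$, then $u_i$ would lie in the interior of $\mathrm{conv}(\Ball\cup\{q\})\subseteq K$, contradicting $u_i\in bd(K)$. With that remark included, the proof is complete and sharp.
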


\subsection{Colorful Helly Theorem}
We recall the Colorful Helly Theorem, as one of its straightforward 
corollaries will be used.

\begin{theorem}[Colorful Helly Theorem, Lov\'asz, 
B\'ar\'any \cite{MR676720}]\label{thm:colorful}
Let $\Ci_1,\dots, \Ci_{d+1}$ be finite families of convex bodies 
in $\Red$, and assume that for any colorful selection $C_1\in \Ci_1,\dots, 
C_{d+1}\in \Ci_{d+1}$, the intersection $\bigcap\limits_{i=1}^{d+1} C_i$ is 
non-empty. 
Then for some $j$, the intersection $\bigcap\limits_{C\in \Ci_j} C$ is also 
non-empty.
\end{theorem}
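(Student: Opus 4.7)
The plan is to follow the classical contradiction argument of Lov\'asz and B\'ar\'any. Assume, toward a contradiction, that $\bigcap_{C\in \Ci_j} C = \emptyset$ for every $j \in \{1,\dots,d+1\}$. The goal is to convert these $d+1$ individual (per-color-class) obstructions into a single \emph{colorful} obstruction — a colorful selection with empty intersection — contradicting the hypothesis.

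I would set up the argument by minimization. Fix a generic linear functional $\ell$ on $\Red$. Since only finitely many colorful selections exist and each yields a non-empty intersection by hypothesis, I can pick a colorful selection $(C_1,\dots,C_{d+1})$ together with a point $p \in \bigcap_j C_j$ that minimizes $\ell$ over all such (selection, point) pairs. For each $j$, because $p \in C_j$ but $\bigcap_{C\in \Ci_j} C = \emptyset$, there exists $C_j' \in \Ci_j$ with $p \notin C_j'$; strictly separate $p$ from $C_j'$ by a hyperplane with outward unit normal $n_j$, so that $\langle n_j, x\rangle < \langle n_j, p\rangle$ for every $x \in C_j'$. By hypothesis, for each $j$ the swapped colorful selection $(C_1,\dots,C_{j-1},C_j',C_{j+1},\dots,C_{d+1})$ has some point $p_j$ in its intersection; by the extremality of $p$ one has $\ell(p_j) \geq \ell(p)$, while $\langle n_j, p_j\rangle < \langle n_j, p\rangle$. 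This yields a symmetric data package of $d+1$ points $p_j$, $d+1$ normals $n_j$, and $d+1$ ``alternative'' color-class members, all organized around $p$.

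The contradiction is extracted by a Carath\'eodory-style balancing argument applied to the vectors $p_j - p$. Since $d+1$ vectors in $\Re^d$ are linearly dependent, an appropriate convex combination $\bar p = \sum_j \lambda_j p_j$ can be engineered so that $\bar p$ lies in some colorful intersection of the \emph{original} $C_i$'s (by zeroing out the coordinates that would force $\bar p$ out of a given $C_i$), while the constraints $\langle n_j, p_j - p\rangle < 0$ combined with generic choice of $\ell$ force $\ell(\bar p) < \ell(p)$, contradicting minimality. The \emph{main obstacle}, and the combinatorial heart of the proof, is precisely this final balancing step: translating $d+1$ per-class separations into a single colorful configuration that strictly improves $\ell$. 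This step is equivalent in spirit to B\'ar\'any's Colorful Carath\'eodory theorem (indeed, Colorful Helly can be deduced from it directly by a dual argument), whereas the preceding setup with the generic $\ell$ and separating hyperplanes is routine.
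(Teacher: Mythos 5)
The paper does not prove Theorem~\ref{thm:colorful}; it cites it as a known result of Lov\'asz and B\'ar\'any, so there is no in-paper proof to compare against. Evaluated on its own terms, your argument has a genuine gap: the direction of the extremization is backwards, which makes the final contradiction impossible to reach. You pick $p$ to minimize the generic functional $\ell$ over \emph{all} colorful (selection, point) pairs, so by construction every point of every colorful intersection, in particular each $p_j$, satisfies $\ell(p_j)\ge\ell(p)$. Then for \emph{any} convex combination $\bar p=\sum_j\lambda_j p_j$ one has $\ell(\bar p)=\sum_j\lambda_j\,\ell(p_j)\ge\ell(p)$ by linearity, so the inequality $\ell(\bar p)<\ell(p)$ you want the balancing step to produce can never hold, no matter how the $\lambda_j$ are chosen. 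The separation inequalities $\langle n_j,p_j-p\rangle<0$ involve the directions $n_j$, not $\ell$, and do not rescue this; nor is it clear how ``zeroing out'' a single $\lambda_i$ would place $\bar p$ in every $C_{i'}$ simultaneously, since for $i'\neq i$ the point $p_{i'}$ (which still appears in the combination) need not lie in $C_{i'}$.

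The classical Lov\'asz argument extremizes in the opposite sense and requires no balancing of points. For each colorful selection $T$ with nonempty intersection, let $y(T)$ be the (unique, for generic $\ell$) $\ell$-minimal point of $\bigcap T$, and choose $T^*=(C_1,\dots,C_{d+1})$ to \emph{maximize} $\ell(y(T^*))$; set $p=y(T^*)$. At the minimizer $p$ of $\ell$ over $\bigcap_i C_i$, the optimality condition puts $-\ell$ in the cone generated by the outer normals of the $C_i$ active at $p$, and Carath\'eodory for cones in $\Red$ lets one keep at most $d$ of these $d+1$ normals. Hence some index $j$ can be dropped while $p$ remains the $\ell$-minimum of $\bigcap_{i\ne j}C_i$. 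By the contradiction hypothesis there is $C_j'\in\Ci_j$ with $p\notin C_j'$; the swapped selection $T'$ then has $\bigcap T'\subseteq\bigcap_{i\ne j}C_i$ omitting $p$, so its $\ell$-minimal point satisfies $\ell(y(T'))>\ell(p)=\ell(y(T^*))$, contradicting maximality. The Carath\'eodory-type combinatorics really enters in bounding the number of active constraints at a minimizer, not in averaging the $p_j$'s.
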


\begin{cor}\label{cor:chellyklee}
Let $\Ci_1,\dots, \Ci_{d+1}$ be finite families of convex bodies, and $L$ 
a convex body in $\Red$. Assume that for any colorful selection $C_1\in 
\Ci_1,\dots, C_{d+1}\in \Ci_{d+1}$, the intersection 
$\bigcap\limits_{i=1}^{d+1} C_i$ 
contains a translate of $L$.
Then for some $j$, the intersection $\bigcap\limits_{C\in \Ci_j} C$ contains a 
translate 
of $L$. 
\end{cor}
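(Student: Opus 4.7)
The plan is to reduce Corollary~\ref{cor:chellyklee} to the Colorful Helly Theorem (Theorem~\ref{thm:colorful}) via the standard Minkowski-difference (``erosion'') trick, sometimes attributed to Klee. The central observation is that the property ``contains a translate of $L$'' can be encoded as the nonemptiness of an auxiliary convex body, so a Helly-type statement about translate-containment turns into the usual Helly-type statement about nonempty intersection.

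Concretely, for each convex body $C \subset \Red$, I would introduce
\[
  C^{\sharp} \;=\; \{x \in \Red \st x + L \subseteq C\}
   \;=\; \bigcap_{\ell \in L} (C - \ell).
\]
The second equality shows $C^{\sharp}$ is convex, being an intersection of translates of $C$. The key commutation identity is
\[
  \bigcap_{\alpha} C_\alpha^{\sharp} \;=\; \Bigl(\bigcap_{\alpha} C_\alpha\Bigr)^{\sharp},
\]
which holds because $x$ lies in the left-hand side iff $x + L \subseteq C_\alpha$ for every $\alpha$, iff $x + L \subseteq \bigcap_\alpha C_\alpha$. In particular, ``$\bigcap_\alpha C_\alpha$ contains a translate of $L$'' is equivalent to ``$\bigcap_\alpha C_\alpha^{\sharp} \neq \emptyset$''.

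With this setup, I would define new color classes $\Ci_i^{\sharp} = \{C^{\sharp} : C \in \Ci_i\}$ for $i = 1, \dots, d+1$, each consisting of convex sets in $\Red$. The colorful hypothesis of the corollary says that for every colorful selection $C_1 \in \Ci_1, \dots, C_{d+1} \in \Ci_{d+1}$, the set $\bigcap_{i=1}^{d+1} C_i$ contains a translate of $L$, which by the commutation identity is the same as $\bigcap_{i=1}^{d+1} C_i^{\sharp} \neq \emptyset$. Thus the families $\Ci_1^{\sharp}, \dots, \Ci_{d+1}^{\sharp}$ satisfy the hypothesis of the Colorful Helly Theorem.

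Applying Theorem~\ref{thm:colorful} then yields some $j$ such that $\bigcap_{C \in \Ci_j} C^{\sharp} \neq \emptyset$, and invoking the commutation identity once more turns this into $\bigl(\bigcap_{C \in \Ci_j} C\bigr)^{\sharp} \neq \emptyset$, i.e.\ $\bigcap_{C \in \Ci_j} C$ contains a translate of $L$, which is exactly the desired conclusion. There is no real obstacle here; the only point requiring a moment of care is to verify that intersection commutes with the erosion operation $C \mapsto C^{\sharp}$, which is immediate from the definition.
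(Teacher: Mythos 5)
Your proof is correct and is essentially the same as the paper's: the operation $C^{\sharp}$ you define is exactly the Minkowski difference $C\sim L$ used in the paper, and both arguments reduce the statement to Theorem~\ref{thm:colorful} by noting that translate-containment of $L$ in an intersection is equivalent to nonemptiness of the intersection of the eroded sets.
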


\begin{proof}[Proof of Corollary~\ref{cor:chellyklee}]
We use the following operation, the Minkowski difference of two convex sets $A$ 
and $B$:
\[
A\sim B:=\bigcap_{b\in B} (A-b).
\]
It is easy to see that $A\sim B$ is the set of those vectors $t$ such that 
$B+t\subseteq A$.

By the assumption, for any colorful selection $C_1\in \Ci_1,\dots, C_{d+1}\in 
\Ci_{d+1}$, we have $\bigcap\limits_{i=1}^{d+1} \left(C_i\sim 
L\right)\neq\emptyset$. By Theorem~\ref{thm:colorful}, for some $j$, we have 
$\bigcap\limits_{C\in \Ci_j} \left(C\sim L\right)\neq\emptyset$, and thus, 
$\bigcap\limits_{C\in 
\Ci_j} C$ contains a translate of $L$.  
\end{proof}

\subsection{Lowest ellipsoid}\label{sec:ordering}

We will follow Lovász' idea of the proof of the Colorful Helly 
Theorem. The first step is to fix an ordering of the objects of study. This 
time, we are looking for an ellipsoid and not a point in the intersection, 
therefore we need an \emph{ordering on the ellipsoids}.

For an ellipsoid $\Ei$, we define its \emph{height} as the largest value of the 
orthogonal projection of $\Ei$ on the last coordinate axis, that is,
$\max\{x^Te_d\st x\in\Ei\}$, where $e_d=(0,0,\dots,0,1)^T$.



\begin{lemma}\label{lowest}
Let $C$ be a convex body that contains an ellipsoid of 
volume $\omegad:=\V(\Ball)$. Then there is a unique ellipsoid of volume 
$\omegad$ 
such that every other ellipsoid of volume $\omegad$ in $C$ has larger 
height. Furthermore, if $\tau\in\Re$ denotes the height of this ellipsoid, then 
the largest volume ellipsoid of the convex body $H_{\tau}\cap 
C$ is this ellipsoid, where $H_{\tau}$ denotes the closed 
half-space $H_{\tau}=\{x\in\Red\st x^Te_d\le \tau\}$. 
\end{lemma}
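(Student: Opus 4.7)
The plan is threefold: (i) establish existence of a height-minimizing ellipsoid by a compactness argument, (ii) show by a shrinking trick that the maximum-volume ellipsoid of the sliced body $H_\tau \cap C$ has volume exactly $\omegad$, and (iii) invoke the uniqueness part of John's Theorem to obtain both uniqueness of the minimizer and the characterization asserted in the second part of the lemma.

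For existence, I would parametrize each ellipsoid of volume $\omegad$ contained in $C$ as a pair $(c,M)$ with $c\in\Red$ and $M$ symmetric positive definite satisfying $\det M=1$, so that $\Ei=c+M\Ball$ has volume $\omegad$. Since $C$ is bounded, the centers $c$ lie in a bounded region and the operator norm of $M$ is controlled by the diameter of $C$; together with $\det M=1$, this forces the eigenvalues of $M$ to stay bounded away from zero. The resulting set of admissible pairs is closed (the condition $\Ei\subseteq C$ is closed) and bounded in a finite-dimensional space, hence compact; by hypothesis it is non-empty. The height of $\Ei$, which equals $c^T e_d+\sqrt{e_d^T M^2 e_d}$, is a continuous function on this compact set, so its minimum $\tau$ is attained by some ellipsoid $\Ei^\ast$, and clearly $\Ei^\ast\subseteq H_\tau\cap C$.

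The crucial step is to show that no ellipsoid contained in $H_\tau\cap C$ has volume strictly greater than $\omegad$. Suppose for contradiction that $\Ei'\subseteq H_\tau\cap C$ has $\V(\Ei')>\omegad$. Let $p$ be a lowest point of $\Ei'$ in the $e_d$-direction, and apply the homothety $\phi_\lambda(x)=p+\lambda(x-p)$ with ratio $\lambda=(\omegad/\V(\Ei'))^{1/d}<1$. Then $\phi_\lambda(\Ei')\subseteq\Ei'\subseteq C$ by convexity, it is an ellipsoid of volume exactly $\omegad$, its lowest point is still $p$, and its highest point is strictly below that of $\Ei'$ because the affine map $\phi_\lambda$ contracts $e_d$-coordinates toward $p^T e_d$ by the factor $\lambda<1$ and $\Ei'$ has full dimension. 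Hence $\phi_\lambda(\Ei')$ has volume $\omegad$ but height strictly less than $\tau$, contradicting minimality.

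Given this claim, $\Ei^\ast$ is an ellipsoid of maximal volume in $H_\tau\cap C$, and the uniqueness part of John's Theorem (Theorem~\ref{John}) identifies it as \emph{the} unique such ellipsoid, which is exactly the second assertion of the lemma. Uniqueness of the height minimizer then follows immediately: any other ellipsoid of volume $\omegad$ and height $\tau$ in $C$ is contained in $H_\tau\cap C$ and therefore also a maximum-volume ellipsoid there, so it must coincide with $\Ei^\ast$. The main obstacle is arranging a single transformation that simultaneously reduces volume to $\omegad$ and strictly lowers the height; centering the homothety at the bottommost point of $\Ei'$ resolves this cleanly, since that point is fixed while every other point, the top in particular, is pulled strictly toward it along the vertical axis.
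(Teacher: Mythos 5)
Your proof is correct and follows essentially the same strategy as the paper's: show that $H_\tau\cap C$ contains no ellipsoid of volume exceeding $\omegad$, then invoke the uniqueness part of John's Theorem to get both the characterization of the lowest ellipsoid as the John ellipsoid of $H_\tau\cap C$ and the uniqueness of the height minimizer. You fill in two details the paper leaves implicit: existence of a height-minimizing ellipsoid (via a compactness argument on the parameter space of admissible pairs $(c,M)$) and the precise shrinking construction (a homothety of ratio $\lambda=(\omegad/\V(\Ei'))^{1/d}<1$ centered at the bottommost point of the putative larger ellipsoid, which fixes that point while strictly lowering the top), where the paper merely asserts that for sufficiently small $\epsilon>0$ the slab $H_{\tau-\epsilon}\cap C$ would contain an ellipsoid of volume $\omegad$. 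Your write-up is thus a more explicit rendering of the same idea rather than a different route.
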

We call this ellipsoid the \emph{lowest ellipsoid} in $C$.

\begin{proof}[Proof of Lemma~\ref{lowest}]
It is not difficult to see that $H_{\tau}\cap C$ does not contain any ellipsoid 
of volume larger than $\omegad$. Indeed, otherwise for a sufficiently small 
$\epsilon>0$, the set $H_{\tau-\epsilon}\cap C$
would contain an ellipsoid of volume equal to $\omegad$, where 
$H_{\tau-\epsilon}$ denotes the closed half-space 
$H_{\tau-\epsilon}=\{x\in\Red\st x^Te_d\le \tau-\epsilon\}$. 

Thus, by Theorem~\ref{John}, $\Ball$ is the 
unique largest volume ellipsoid of $H_{\tau}\cap C$. It follows that $\Ball$ is 
the unique lowest ellipsoid of $C$.
\end{proof}

\subsection{Quantitative Volume Theorem with Ellipsoids}\label{sec:qhelly}

We will rely on the following quantitative Helly theorem.
\begin{theorem}[Quantitative Volume Theorem]\label{thm:qhellyellvol}
Let $C_1,\dots,C_n$ be convex sets in $\Red$. Assume that the intersection of 
any $2d$ of them is of volume at least 1. Then $\V\left(\bigcap\limits_{i=1}^n 
C_i\right)\geq c^dd^{-3d/2}$ with an absolute constant $c>0$.
\end{theorem}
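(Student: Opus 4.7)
The plan is to normalize via John's theorem and then extract a small subfamily controlling $K$. Let $K := \bigcap_{i=1}^n C_i$. By Theorem~\ref{John} and Corollary~\ref{cor:johnvolume}, we may apply an affine map $T$ (which preserves all volume ratios) so that $\Ball$ is the maximum volume ellipsoid of $\tilde K := T(K)$; in particular $\Ball \subseteq \tilde K \subseteq d\Ball$. Under this normalization the hypothesis becomes that any $2d$ of the transformed bodies $\tilde C_i := T(C_i)$ intersect in volume at least $|\det T|$, and the target bound $\V(K) \geq c^d d^{-3d/2}$ is equivalent to $\V(\tilde K) \geq c^d d^{-3d/2} |\det T|$.

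The heart of the argument is a selection lemma: I would extract a subfamily $\tilde C_{i_1},\dots, \tilde C_{i_{2d}}$ whose intersection $\tilde K^\star$ is sandwiched inside a polytope $P$ with $\V(P) \leq c^d d^d$. The polytope $P$ is built from supporting halfspaces of the $\tilde C_{i_j}$ at suitably chosen John contact points. Theorem~\ref{John} furnishes $m \leq d(d+3)/2$ contact points $u_1,\dots,u_m \in \partial \Ball \cap \partial \tilde K$ together with weights $\lambda_j > 0$ satisfying $\sum_j \lambda_j u_j u_j^T = I_d$ and $\sum_j \lambda_j u_j = 0$. The polytope $\bigcap_{j=1}^m \{x : u_j^T x \leq 1\}$ formed from all $m$ contact halfspaces is contained in $d\Ball$, but has too many facets. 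To reduce from $m$ to $2d$ facets while keeping the polytope contained in $O(d^{3/2})\Ball$, I would apply Carathéodory's theorem in the cone of symmetric positive semidefinite matrices (yielding a sub-identity supported on at most $d(d+1)/2 + 1$ points), followed by a spectral sparsification argument of Batson--Spielman--Srivastava type to sparsify further down to $2d$.

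Once such a subfamily is in place, direct computation finishes the proof: $\tilde K^\star \subseteq P$ gives $|\det T| \leq \V(\tilde K^\star) \leq \V(P) \leq c^d d^d$, while $\V(\tilde K) \geq \V(\Ball) = \omegad \sim c^d d^{-d/2}$, so that $\V(K) = \V(\tilde K)/|\det T| \geq c^d d^{-3d/2}$ after absorbing polynomial factors into $c^d$.

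The main obstacle is the sparsification step: reducing $m \approx d^2/2$ John contact points down to exactly $2d$ while keeping the associated polytope essentially contained in $O(d^{3/2})\Ball$ is the nontrivial geometric core. Indeed, using all $m$ contact points naively yields the original $d^{-2d^2}$ bound of \cite{BKP82}, and successive refinements in the sparsification quality give the $d^{-2d}$ bound of \cite{MR3439267} and Brazitikos's $d^{-3d/2}$ refinement; any approach to a better constant in this theorem would almost certainly have to improve precisely this selection lemma.
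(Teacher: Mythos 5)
The paper does not prove Theorem~\ref{thm:qhellyellvol}; it is quoted as a known result, with the volume bound $c^d d^{-3d/2}$ credited to Brazitikos~\cite{Bra17}, refining the $c^d d^{-2d}$ bound of~\cite{MR3439267}, which in turn improved the original $d^{-2d^2}$ of B\'ar\'any--Katchalski--Pach~\cite{BKP82}. So there is no ``paper's own proof'' to compare your attempt against, and the paper itself never carries out the argument you sketch.

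As a reconstruction of the proof in the cited literature, your outline is pointed in roughly the right direction --- normalize by John's theorem, sparsify the John decomposition to a subfamily of $2d$ contact halfspaces, and compare the volume of the resulting polytope to $\omega_d$ --- but it is not a proof: you explicitly defer the core selection lemma (``reduce from $m$ to $2d$ facets while keeping the polytope inside $O(d^{3/2})\Ball$''), and that lemma is essentially the entire content of the theorem. Two technical points in your sketch also need attention. First, the Carath\'eodory reduction in the PSD cone gives nothing new here: John's theorem already certifies $m\le d(d+3)/2$. Second, a spectral sparsification of $\sum_j\lambda_j u_ju_j^T\approx I_d$ alone does not bound the polytope $\bigcap_j\{x:u_{i_j}^Tx\le 1\}$: with all $u_{i_j}$ in one halfspace this polytope is unbounded regardless of the spectral approximation. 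One must simultaneously control the centering condition $\sum_j\lambda_j u_j=0$ (in the literature this is handled, e.g., by sparsifying a lifted rank-one decomposition in $\Re^{d+1}$), and that is exactly where the $d^{-3d/2}$ versus $d^{-2d}$ distinction is decided. In short, the overall strategy matches the references, but the crucial sparsification step is asserted rather than established, so the proposal has a genuine gap.
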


As noted in Section~\ref{sec:results}, it is shown in \cite{MR3439267} that the 
$d^{-3d/2}$ term cannot be improved further than $d^{-d/2}$.

\begin{cor}[Quantitative Volume Theorem with 
Ellipsoids]\label{lem:qhellyell}
Let $C_1,\dots,C_n$ be convex sets in $\Red$. Assume that the intersection of 
any $2d$ of them contains an ellipsoid of volume at least 1. Then 
$\bigcap\limits_{i=1}^n C_i$ contains an ellipsoid of volume at least 
$c^dd^{-5d/2}$ with an absolute constant $c>0$.
\end{cor}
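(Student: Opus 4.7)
The plan is to derive the corollary from the Quantitative Volume Theorem (Theorem~\ref{thm:qhellyellvol}) together with the John-type volume comparison stated in Corollary~\ref{cor:johnvolume}. The idea is that containing an ellipsoid of volume $\ge 1$ is a \emph{stronger} hypothesis than having volume $\ge 1$, while the conclusion we want is a \emph{weaker} statement than a bound on the total volume. So both the hypothesis and the conclusion can be translated across the volume/ellipsoid correspondence at a cost of a factor $d^{-d}$ once.

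First, I would note that if a convex body contains an ellipsoid of volume $\ge 1$, then its volume is $\ge 1$. Hence the hypothesis of Corollary~\ref{lem:qhellyell} implies the hypothesis of Theorem~\ref{thm:qhellyellvol}: the intersection of any $2d$ of the $C_i$ has volume at least $1$. Applying Theorem~\ref{thm:qhellyellvol} directly gives
\[
\V\!\left(\bigcap_{i=1}^{n} C_i\right) \ge c^d d^{-3d/2}
\]
for an absolute constant $c>0$.

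Next, I would apply John's Theorem to the intersection $K := \bigcap_{i=1}^n C_i$. By Corollary~\ref{cor:johnvolume}, if $\Ei$ is the maximal volume ellipsoid contained in $K$, then (after translating so that $\Ei$ is centered at the origin) $K \subseteq d\Ei$, and therefore $\V(K) \le d^d \V(\Ei)$. Combined with the previous inequality this yields
\[
\V(\Ei) \;\ge\; d^{-d}\,\V(K) \;\ge\; d^{-d}\cdot c^d d^{-3d/2} \;=\; c^d d^{-5d/2},
\]
which is exactly the bound claimed in the corollary.

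There is no real obstacle here; the only thing to be careful about is the constants absorbed into $c$, and the fact that $K$ is indeed a convex body (bounded with nonempty interior) so that John's Theorem is applicable. Boundedness is clear because any single $C_i$ has finite volume (it contains an ellipsoid of finite volume and lies in $\Red$, and we may assume at least one of the $C_i$ is bounded, or else truncate by a large ball without affecting the argument), and the interior is nonempty because $K$ contains an ellipsoid of positive volume. Thus the two-line chain above is the complete proof sketch.
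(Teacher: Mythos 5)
Your proof is correct and takes essentially the same route as the paper: apply Theorem~\ref{thm:qhellyellvol} to get a volume lower bound for the intersection, then use Corollary~\ref{cor:johnvolume} to convert that into an ellipsoid of volume at least $d^{-d}$ times as large. The extra remarks about boundedness and nonempty interior are sensible but not part of the paper's (terser) argument.
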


Theorem~\ref{thm:qhellyellvol} was proved by 
B\'ar\'any, Katchalski and Pach \cite{BKP82} with the weaker volume bound 
$d^{-2d^2}$. In \cite{MR3439267}, the volume bound $c^d d^{-2d}$ was shown, and 
this argument was later refined by Brazitikos \cite{Bra17} to obtain the bound 
presented above. An inspection of the argument in \cite{MR3439267} shows that 
Corollary~\ref{lem:qhellyell} holds with the slightly stronger bound 
$c^dd^{-3d/2}$ as well. However, as this constant in the exponent is of no 
consequence, we instead deduce Corollary~\ref{lem:qhellyell} in the form 
presented above from Theorem~\ref{thm:qhellyellvol}.

\begin{proof}[Proof of Corollary~\ref{lem:qhellyell}]
Let $C_1,\dots,C_n$ be convex sets in $\Red$ satisfying the assumptions of 
Corollary~\ref{lem:qhellyell}. In particular, they satisfy the assumptions of 
Theorem~\ref{thm:qhellyellvol}, and hence, $\V\left(\bigcap\limits_{i=1}^n 
C_i\right)\geq c^dd^{-3d/2}$. Finally, Corollary~\ref{cor:johnvolume} yields 
that $\bigcap\limits_{i=1}^n C_i$ contains an ellipsoid of volume at least 
$c^dd^{-5d/2}$ completing the proof of Corollary~\ref{lem:qhellyell}.
\end{proof}

\section{Proofs}\label{sec:proofs}

\subsection{Proof of Proposition~\ref{ell}}
We will prove the following statement, which is clearly equivalent to 
Proposition~\ref{ell}.

\emph{
Assume that the largest volume ellipsoid contained in $\bigcap\limits_{C\in 
\Ci} C$ is of volume $\omegad=\V(\Ball)$. Then there are $d(d+3)/2$ sets in 
$\Ci$ such that the largest volume ellipsoid in their intersection is of volume 
$\omegad$.
}

The problem is clearly affine invariant, and thus, we may assume that the 
largest volume ellipsoid in $\bigcap\limits_{C\in \Ci} C$ is the unit ball 
$\Ball$.

By one direction of Theorem~\ref{John}, there are contact points 
$u_1,\dots,u_m\in bd(\bigcap\limits_{C\in \Ci} C)\cap bd(\Ball)$ and positive 
numbers $\lambda_1,\dots,\lambda_m$ with $d+1\le m\le \frac{d(d+3)}{2}$ 
satisfying the equations in Theorem~\ref{John}. We can choose $C_1,\dots 
C_{m}\in \Ci$ such that $u_i\in bd(C_i)$ for $i=1,\dots,m$.

By the other direction of Theorem~\ref{John}, $\Ball$ is the largest volume 
ellipsoid of $\bigcap\limits_{i=1}^{m} C_i$, completing the proof of 
Proposition~\ref{ell}.

\subsection{Proof of Proposition~\ref{colell}}

\begin{lemma}\label{step3}
Let $C_1,\dots, C_{d(d+3)/2}$ be convex bodies in $\Red$. Assume that 
$K:=\bigcap\limits_{i=1}^{d(d+3)/2}C_i$ contains an ellipsoid of volume 
$\omegad$. Set 
$K_j:=\bigcap\limits_{i=1,i\ne j}^{d(d+3)/2}C_i$, and let $\Ei$ denote the 
lowest ellipsoid in $K$. Then there exists a $j$ such that $\Ei$ is also the 
lowest ellipsoid of $K_j$. 
\end{lemma}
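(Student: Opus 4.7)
Proof plan. My strategy is to reduce the lemma to a counting argument on the contact points in a John decomposition of $\Ei$ inside $H_\tau\cap K$, followed by a perturbation step to handle a degenerate case. By affine invariance I normalize $\Ei=\Ball$ so $\tau=1$; Lemma~\ref{lowest} then identifies $\Ball$ as the maximum-volume ellipsoid of $H_\tau\cap K$, and Theorem~\ref{John} supplies contact points $u_1,\dots,u_m\in bd(\Ball)\cap bd(H_\tau\cap K)$ with $d+1\le m\le d(d+3)/2$ and positive $\lambda_i$ satisfying the John equations $\sum\lambda_i u_i=0$ and $\sum\lambda_i u_i u_i^T=I_d$.

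Each $u_k$ lies on the boundary of one or more of the $d(d+3)/2+1$ ``constraints'' $H_\tau,C_1,\dots,C_{d(d+3)/2}$, and I assign each $u_k$ a single \emph{support} drawn from this list. A crucial observation is that $bd(\Ball)\cap bd(H_\tau)=\{e_d\}$, so $H_\tau$ can support at most one contact point, namely $e_d$. In the easy case where $e_d$ appears among the contact points, assigning it the support $H_\tau$ leaves the other $m-1\le d(d+3)/2-1$ supports among the $C_i$'s, so by pigeonhole some $C_j$ is unused; removing this $C_j$ keeps every contact point on $bd(H_\tau\cap K_j)$, and the converse of Theorem~\ref{John} then shows $\Ball$ is still the max-volume ellipsoid of $H_\tau\cap K_j$. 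A final application of Lemma~\ref{lowest} identifies $\Ball$ as the lowest ellipsoid of $K_j$.

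The principal obstacle is the degenerate subcase in which no John decomposition contains $e_d$, $m=d(d+3)/2$, and the contact points are assigned bijectively to the $C_i$'s so that no $C_j$ is immediately unused. Here I would invoke the \emph{lowest}-ellipsoid hypothesis: for $\tau'<\tau$ the set $H_{\tau'}\cap K$ contains no ellipsoid of volume $\omegad$, which should force $H_\tau$ to be an active constraint in the John system. Concretely, I augment the system by introducing $e_d$ as an additional contact point with variable multiplier $\mu\ge 0$ and solve the resulting one-parameter family $\lambda_i(\mu)=\lambda_i^\ast+\mu\delta_i$. Taking the trace of the matrix equation yields $\sum_i\lambda_i+\mu=d$, so $\sum_i\delta_i=-1$ and at least one $\delta_j<0$. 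Growing $\mu$ from $0$, the first $\lambda_j$ to hit zero at some $\mu^\ast>0$ yields a new John decomposition that contains $e_d$ and drops $u_j$, reducing to the easy case with $C_j$ unused.

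The hard part is making this perturbation step rigorous: I must show the augmented linear system admits a genuine branch with positive $\mu$ and that the nonnegativity of all $\lambda_i(\mu)$ survives until some single $\lambda_j$ vanishes. It is precisely here that the ``lowest ellipsoid'' hypothesis (rather than merely the existence of an ellipsoid of volume $\omegad$ in $K$) is essentially used, so this degenerate subcase is the real technical heart of the lemma.
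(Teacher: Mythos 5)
Your approach is genuinely different from the paper's, and it has a gap precisely where you flag one. The paper does not touch the John decomposition of $\Ei$ in $H_\tau\cap K$ at all. Instead, it argues by contradiction: if no $K_j$ has $\Ei$ as its lowest ellipsoid, then each $K_j$ contains an ellipsoid of volume $\omegad$ strictly below $\Ei$, so for some small $\epsilon>0$ every $K_j\cap H_{\tau-\epsilon}$ contains an ellipsoid of volume $\omegad$. The paper then applies Proposition~\ref{ell} (already proved) to the family of $\tfrac{d(d+3)}{2}+1$ sets $K_1,\dots,K_{d(d+3)/2},H_{\tau-\epsilon}$: every intersection of $\tfrac{d(d+3)}{2}$ of them is either $K$ or some $K_j\cap H_{\tau-\epsilon}$, each containing an ellipsoid of volume $\omegad$, so the full intersection $K\cap H_{\tau-\epsilon}$ does too, contradicting that $\Ei$ is lowest in $K$. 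This sidesteps entirely the combinatorics of contact points.

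Your ``easy'' case is essentially sound (modulo stating the converse of Lemma~\ref{lowest}, which you implicitly need to go from ``$\Ball$ is the John ellipsoid of $H_\tau\cap K_j$'' back to ``$\Ball$ is the lowest ellipsoid of $K_j$''; this converse is true but not what the lemma literally says). The genuine gap is the degenerate subcase, and you say so yourself. Your parametrization $\lambda_i(\mu)=\lambda_i^\ast+\mu\delta_i$ presupposes that the vector $(e_de_d^T,e_d)$ lies in the span of $\{(u_iu_i^T,u_i)\}_{i=1}^m$; if it does not, every direction in the null space of the augmented system has $\mu$-component zero and no branch with $\mu>0$ exists. (When it fails, the vectors $\{(u_iu_i^T,u_i)\}$ are linearly dependent, and one could instead move along that dependency to kill some $\lambda_j$ directly and drop below $d(d+3)/2$ contact points — but you do not make this dichotomy, so as written the argument does not close.) Also, your claim that the ``lowest-ellipsoid'' hypothesis is ``essentially used'' in this degenerate step is not accurate: the perturbation you describe is pure linear algebra on the John system, and the hypothesis enters only through the initial invocation of Lemma~\ref{lowest}, exactly as it does in the easy case. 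In short, your route may well be completable, but it is substantially more delicate than the paper's, which trades all of this for one clean application of the already-established Proposition~\ref{ell}.
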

	 
\begin{proof}[Proof of Lemma~\ref{step3}]
Let $\tau$ denote the height of $\Ei$. By Lemma~\ref{lowest}, $\Ei$ is the 
largest volume ellipsoid of $K\cap H_{\tau}$, where $H_{\tau}$ is the 
half-space defined in Lemma~\ref{lowest}.
	
Suppose that $\Ei$ is not the lowest ellipsoid in $K_j$ for every 
$j\in\{1,\dots,d(d+3)/2\}$. Since $\Ei\subset K\subset K_j $, this means that 
each $K_j$ contains a lower ellipsoid than $\Ei$ of volume $\omegad$. Therefore 
we can choose a small $\epsilon>0$ such that $K_j\cap H_{\tau-\epsilon}$ 
contains an ellipsoid of volume $\omegad$ for each $j$, where 
$H_{\tau-\epsilon}$ denotes the closed half-space 
$H_{\tau-\epsilon}=\{x\in\Red\st x^Te_d\le \tau-\epsilon\}$. 
	
Let us consider now the following $\frac{d(d+3)}{2}+1$ sets: $K_1, K_2, \dots, 
K_{d(d+3)/2}, H_{\tau-\epsilon}$. If we take the intersection of 
$\frac{d(d+3)}{2}$ of these sets, we obtain either $K$, or $K_j\cap 
H_{\tau-\epsilon}$ for some $j$. By our assumption, $K$ contains an ellipsoid 
of volume $\omegad$. By the choice of $\epsilon$, we have that $K_j\cap 
H_{\tau-\epsilon}$ also contains an ellipsoid of volume $\omegad$. Hence, 
we can apply Proposition~\ref{ell}, which yields that
$C_1\cap \dots \cap C_{d(d+3)/2}\cap H_{\tau-\epsilon}=K\cap H_{\tau-\epsilon}$ 
also contains an ellipsoid of volume $\omegad$. This contradicts the fact that 
$\Ei$ is the lowest ellipsoid in $K$, and thus, Lemma~\ref{step3} follows.
\end{proof}

We will prove the following statement, which is clearly equivalent to 
Proposition~\ref{colell}.

\emph{
Assume that for every colorful selection $C_1\in \Ci_1,\dots, C_{d(d+3)/2}\in 
\Ci_{d(d+3)/2}$, the intersection $\bigcap\limits_{i=1}^{d(d+3)/2}C_i$
contains an ellipsoid of volume $\omegad$. We will show that 
for some $j$, the intersection $\bigcap\limits_{C\in\C_j} C$ contains 
an ellipsoid of volume $\omegad$.}


By Lemma~\ref{lowest}, we can choose 
the lowest ellipsoid in each of these intersections. Let us denote the set of 
these ellipsoids as ${\mathcal B}$. Since we have finitely many intersections, 
there is a 
highest one among these ellipsoids. Let us denote this ellipsoid by 
$\Ei_{max}$.

\noshow{
 Figure~\ref{main} depicts a very special and trivial case of the theorem. 
 Three of the color classes only contain a single ellipsoid, and the other two 
 contains two parallelograms. In this case ${\mathcal B}$ only contains the 
four dark 
 ellipsoids.  
	
	\begin{figure}[!htbp]
		\begin{center}
			\includegraphics[height=2.3in]{main.png}
			\caption{A very special case of Proposition~\ref{colell}. }
			\label{main}
		\end{center}
	\end{figure}
}

$\Ei_{max}$ is defined by some $C_1\in \Ci_1,\dots, C_{d(d+3)/2}\in 
\Ci_{d(d+3)/2}$. Once again let $K_j=\bigcap\limits_{i=1,i\ne j}^{d(d+3)/2}C_i$ 
and $K=\bigcap\limits_{i=1}^{d(d+3)/2}C_i$. By Lemma~\ref{step3}, there is a 
$j$ such that $\Ei_{max}$ is the lowest ellipsoid in $K_j$. We will show that 
$\Ei_{max}$ lies in every element of $\Ci_j$ for this $j$.

Fix a member $C_0$ of $\Ci_j$. Suppose that 
$\Ei_{max}\not\subset C_0$. Then $\Ei_{max}\not\subset C_0\cap K_j$. By the 
assumption of Proposition~\ref{colell}, $C_0\cap K_j$ contains an ellipsoid of 
volume $\omegad$, since it is the intersection of a colorful selection of sets. 
Since $C_0\cap K_j\subset K_j$, the lowest ellipsoid of $C_0\cap K_j$ is at 
least as high as the lowest ellipsoid of $K_j$. But the unique lowest ellipsoid 
of $K_j$ is $\Ei_{max}$, and $\Ei_{max}\not\subset C_0\cap K_j$. So the lowest 
ellipsoid of $C_0\cap K_j$ lies higher than $\Ei_{max}$. This contradicts that 
$\Ei_{max}$ was chosen to be the highest among the ellipsoids in ${\mathcal 
B}$. So 
$\Ei_{max}\subset C_0$. Since $C_0\in \Ci_j$ was chosen arbitrarily, we obtain 
that $\Ei_{max}\subset \bigcap\limits_ {C\in \Ci_j} C$, completing the proof of 
Proposition~\ref{colell}.

\subsection{Proof of Proposition~\ref{thm:SaXuSo}}\label{subsec:SaXuSo}
Consider an arbitrary colorful selection of $d(d+3)/2$ convex bodies. By 
Corollary~\ref{lem:qhellyell}, their intersection contains an ellipsoid of 
volume 
at least $c^dd^{-5d/2}$. It follows immediately from Proposition~\ref{colell}, 
that 
the intersection of one of the color classes contains an ellipsoid of volume at 
least $c^dd^{-5d/2}$, completing the proof of Proposition~\ref{thm:SaXuSo}.

\subsection{Proof of Theorem~\ref{thm:colorfulqellipsoid}}

We will prove the following statement, which is clearly equivalent to 
Theorem~\ref{thm:colorfulqellipsoid}.

\emph{
Assume that the intersection of all colorful selections of $2d$ sets contains 
an ellipsoid of volume at least $\omegad$. Then, there is an $1\leq i \leq 3d$ 
such that $\bigcap\limits_{C\in \C_i} C$ contains an ellipsoid of volume at 
least $c^{d^2}d^{-5d^2/2}\omegad$ with an absolute constant $c\geq0$.
}

\begin{lemma}\label{lem:hellyklee}
Assume that $\Ball$ is the largest volume ellipsoid contained in the convex set 
$C$ in  $\Red$. Let $\Ei$ be another ellipsoid in $C$ of 
volume at least $\delta\omegad$ with $0<\delta<1$. Then there is a translate of 
$\frac{\delta}{d^{d-1}} 
\Ball$ which is contained in $\Ei$.
\end{lemma}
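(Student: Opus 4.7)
The plan is to bound the principal semi-axes of $\Ei$ from above using John's theorem, and then use the volume lower bound to force the shortest semi-axis to be large enough to contain the required ball.

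First, since $\Ball$ is the maximum volume ellipsoid of $C$, Corollary~\ref{cor:johnvolume} gives $C \subseteq d\Ball$, and hence $\Ei \subseteq d\Ball$. Let $a_1 \leq a_2 \leq \dots \leq a_d$ denote the principal semi-axes of $\Ei$. Comparing widths of $\Ei$ and $d\Ball$ along the direction of the $i$-th principal axis shows that $2a_i \leq 2d$, so $a_i \leq d$ for every $i$. (Equivalently, $h_{\Ei}(u) \leq h_{d\Ball}(u) = d$ for every unit vector $u$, applied to each principal direction.)

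Second, the volume formula for an ellipsoid gives $\V(\Ei) = \omegad \prod_{i=1}^d a_i$, so the assumption $\V(\Ei) \geq \delta\omegad$ yields $\prod_i a_i \geq \delta$. Combining this with the bounds $a_2, \dots, a_d \leq d$ from the previous step gives
$$a_1 \;\geq\; \frac{\delta}{a_2 \cdots a_d} \;\geq\; \frac{\delta}{d^{d-1}}.$$
Since $\Ei$ contains the Euclidean ball of radius $a_1$ centered at its own center, it contains a translate of $a_1 \Ball$, and therefore a translate of $\tfrac{\delta}{d^{d-1}}\Ball$, as required.

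I do not anticipate any real obstacle: the argument is essentially a volume/shortest-axis trade-off for ellipsoids, and John's theorem (via Corollary~\ref{cor:johnvolume}) supplies exactly the diameter bound on $\Ei$ needed to turn a volume lower bound into a bound on the shortest semi-axis. The only step requiring a line of care is the width bound $a_i \leq d$, but it is an immediate consequence of $\Ei \subseteq d\Ball$.
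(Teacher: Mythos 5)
Your proof is correct and follows essentially the same argument as the paper: use Corollary~\ref{cor:johnvolume} to get $\Ei\subseteq d\Ball$ and hence $a_i\leq d$ for each semi-axis, then use the volume formula $\V(\Ei)=\omegad\prod_i a_i\geq\delta\omegad$ to conclude the shortest semi-axis is at least $\delta/d^{d-1}$, so $\Ei$ contains a translate of $\frac{\delta}{d^{d-1}}\Ball$.
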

\begin{proof}[Proof of Lemma~\ref{lem:hellyklee}]
If the length of all $d$ semi-axes $a_1,\dots,a_d$ of $\Ei$ are at least 
$\lambda$ for some $\lambda>0$, then clearly, $\lambda \Ball+c \subset 
\Ei$, where $c$ denotes the center of $\Ei$. We will show that all the 
semi-axes are long enough.

By Corollary~\ref{cor:johnvolume}, $\Ei\subset C \subset d \Ball$. Therefore, 
$a_i\leq d$ for 
every $i=1,\dots,d$. Since the volume of $\Ei$ is 
$a_1\cdots a_d \omegad\geq \delta\omegad$, we have  $a_i\geq 
\frac{\delta}{d^{d-1}}$ for every $i=1,\dots,d$, completing the proof of 
Lemma~\ref{lem:hellyklee}.
\end{proof}

Consider the lowest ellipsoid in the intersection of all colorful selections of 
$2d-1$ sets. We may assume that the highest one of these ellipsoids is $\Ball$. 
By possibly changing the indices of the families, we may assume that the 
selection is $C_1\in\C_1,\dots,C_{2d-1}\in\C_{2d-1}$. We call $\C_{2d}, 
\C_{2d+1},\dots,\C_{3d}$ the \emph{remaining families}.

Consider the half-space $H_{1}=\{x\in\Red\st x^Te_d\le 1\}\supset \Ball$. By 
Lemma~\ref{lowest}, $\Ball$ is the largest volume ellipsoid contained in $M := 
C_1\cap\dots\cap C_{2d-1} \cap H_1$.

Next, take an arbitrary colorful selection 
$C_{2d}\in\C_{2d}, C_{2d+1}\in\C_{2d+1},\dots,C_{3d}\in\C_{3d}$ of the 
remaining $d+1$ families. We 
claim that the intersection of any $2d$ sets of
\[C_1,\dots, C_{2d-1}, H_1, C_{2d},\dots, C_{3d}\]
contains an ellipsoid of volume at least $\omegad$. 
Indeed, if $H_1$ is not among those $2d$ sets, then our assumption ensures 
this. 
If $H_1$ is among them, then by the choice of $H_1$, the claim holds.

Therefore, by Theorem~\ref{lem:qhellyell}, the 
intersection 
\[
 \bigcap\limits_{i=1}^{3d} C_i\cap H_1
\]
contains an ellipsoid $\Ei$ of volume at least 
$\delta\omegad$, where $\delta:=c^dd^{-3d/2}$. Clearly, $\Ei\subset M$.

Since $\Ball$ is the maximum volume ellipsoid contained in $M$, by Lemma 
\ref{lem:hellyklee}, we 
have that there is a translate of $\frac{\delta}{d^{d-1}} \Ball$ which is 
contained in $\Ei$ and thus in $\bigcap\limits_{i=2d}^{3d} C_i$.

Thus, we have shown that any colorful selection 
$C_{2d}\in\C_{2d}, C_{2d+1}\in\C_{2d+1},\dots,C_{3d}\in\C_{3d}$ of the 
remaining $d+1$ families, 
$\bigcap\limits_{i=2d}^{3d} C_i$ contains a translate of the same convex body 
$c^dd^{-5d/2}\Ball$. It follows from Corollary~\ref{cor:chellyklee} 
that there is 
an index $2d\leq i \leq 3d$ such that $\bigcap\limits_{C\in \C_i} C$ contains a 
translate of $c^dd^{-5d/2}\Ball$, which is an ellipsoid of 
volume $c^{d^2}d^{-5d^2/2}\omegad$, finishing the proof of 
Theorem~\ref{thm:colorfulqellipsoid}.

\subsection{Proof of Corollary~\ref{cor:colorfulqellipsoid}}
By Corollary~\ref{cor:johnvolume}, the volume of the largest ellipsoid in a 
convex body is at least $d^{-d}$ times the volume of the body.
Corollary~\ref{cor:colorfulqellipsoid} now follows immediately from 
Theorem~\ref{thm:colorfulqellipsoid}.

\newcommand\invisiblesection[1]{%
  \refstepcounter{section}%
  \addcontentsline{toc}{section}{\protect\numberline{\thesection}#1}%
  \sectionmark{#1}}

\invisiblesection{Acknowledgement}
\subsection*{Acknowledgement}
Some of the results were part of GD's master's thesis. 
Part of the research was carried out while MN was a member of J\'anos Pach's 
chair of DCG at EPFL, Lausanne, which was supported by Swiss National Science 
Foundation Grants 200020-162884 and 200021-165977.
GD and MN also acknowledge the support of the National Research, Development 
and Innovation Fund grant K119670 as well as that of the MTA-ELTE Lend\"ulet 
Combinatorial Geometry Research Group.


\bibliographystyle{halpha-abbrv}
\bibliography{biblio}

\end{document}